\documentclass[ 11pt]{article}
\usepackage{amsmath}
\usepackage{amssymb}
\usepackage{tipa}
\usepackage{textcomp}
\usepackage{amsthm}
\usepackage{array}
\usepackage{color}
\usepackage{fancyhdr}
\usepackage{enumerate}
\usepackage{multicol}
\usepackage{titlesec}

\usepackage{cancel}
\usepackage{amsthm}
\usepackage{pgfplots}

\theoremstyle{plain}

\newtheorem{corollary}{Corollary}
\newtheorem{lemma}{Lemma}

\theoremstyle{definition}
\newtheorem{definition}{Definition}

\newtheorem{thm}{Theorem}
\theoremstyle{remark}

\topmargin0.0cm
\headheight0.0cm
\headsep0.0cm
\oddsidemargin0.0cm
\textheight23.0cm
\textwidth16.5cm
\footskip1.0cm

\begin{document}

\title{A New Strategy for Riemann Hypothesis}
\author{Irfan Okay}
\maketitle

\begin{abstract}
A square-free integer is a positive integer that is not divisible by the square of any prime. Merten's function, $M(x)$ is defined as the difference between the number of square free integers with an even number of prime factors and the number of those with an odd number of prime factors up to $x$.  It's well known that  $M(x)=O(x^{1/2+\epsilon})$ is equivalent to Riemann Hypothesis. In this paper, we derive a new equation involving Merten's function that leads to a promising technique for Riemann Hypothesis.
\end{abstract}

\section{Introduction}
A square free integer is a positive integer that is not divisible by the square of any prime number.  Merten's function is defined by

\begin{equation}
M(x)=\sum_{a\leq x} \mu(a)
\end{equation}

where $\mu$ is the Mobius function;

\begin{equation} \mu(a)=\begin{cases}
                   0, & \mbox{if } p^2/a\quad  \mbox{for some p} \\
                   1, & \mbox{if } a=1 \\
                   (-1)^{\omega{(a)}}, & \mbox{otherwise } \\
                 \end{cases}
\label{mu}\end{equation}

where  $\omega(a)$ is the number of prime factors of $a$. Basically, Merten's function computes the difference between the number of square free integers with an even number of prime factors, and the number of those with an odd number of primes factors. It is well known that Riemann Hypothesis is equivalent to

\begin{equation}
M(x)=O(x^{1/2+\epsilon})  \label{mert}
\end{equation}

given $\epsilon >0$, for all sufficiently large $x$ \cite{littlewood1}. A stronger conjecture, known as Merten's conjecture is
\begin{equation}
M(x)=O(x^{1/2})  \label{mertt}
\end{equation}

which was disproved \cite{Mertens1}, \cite{Mertens2}. In this paper, we prove the following equation;

\begin{equation}\sum_{n=1}^{m} \sum_{i=1}^{p-1} M'_{p}(\frac{x}{np},  \quad \frac{x}{np-i})
=-([\log_px])(p-1) \label{result}
\end{equation}

for any positive irrational number $x$ and any prime $p<x$. Here  $m=[x/p]+2$. $[.]$ denotes the integer part of the number inside the bracket and  $M_p'(a, y)$ is the Mobius sum taken over the square free integers in the interval $(x, y)$ that are not divisible by $p$;

\begin{eqnarray}
M'_{p}(x, \quad y)&=&\sum_{x<a< y, p\nmid a} \mu(a)\label{defmp'}
\end{eqnarray}

Note that $M(0, x)=M(x),  M_p'(0, x)=M_p'(x)$, and we have $M(x)=M_p'(x)+M_p'(x/p)$.   Thus, any bound on $M_p'(x)$ can easily be converted to a bound on $M(x)$.  The irrationality assumption was only needed to ensure that none of the boundary points in the sum above will contain an integer. 

\section{A Strategy for Riemann Hypothesis}

Here is a rough visual sketch of the intervals in the double sum in (\ref{result}) for $p=7$.
\bigskip
\bigskip
\bigskip

\begin{tikzpicture}[thick, scale=0.43]

 \draw [thick, black, dotted] (-33,1.5) -- (-31,1.5);

 \draw [thick, black, dotted] (-32.5,-0.5) -- (-31.5,-0.5);

 \draw [thick, black, -] (-35,0) -- (-2,0);
     \draw [thick, blue, -] (-28.5,0.5) -- (-23,0.5);
     \draw [thick, blue, -] (-28.5,1) -- (-24,1);
     \draw [thick, blue, -] (-28.5,1.5) -- (-25,1.5);
     \draw [thick, blue, -] (-28.5,2) -- (-26,2);
     \draw [thick, blue, -] (-28.5,2.5) -- (-27,2.5);
     \draw [thick, blue, -] (-28.5,3) -- (-28,3);

     \node(-22.5, 0) {.};
      \node[below] at (-28, 0){\scriptsize {x/3p}};
     \node[below] at (-22, 0){\scriptsize {x/2p}};
     \node[below] at (-13, 0) {\scriptsize x/p};
     \node[below] at (-2, 0) {\scriptsize x};

      \node[below] at (-28, -1) {\tiny(n=3)};
       \node[below] at (-22, -1) {\tiny(n=2)};
     \node[below] at (-13, -1) {\tiny(n=1)};

       \node[right] at (-2.5, 3) {\tiny i=1};
       \node[right] at (-2.5, 0.5) {\tiny i=p-1};
    \node[right] at (-2.5, 2) {\vdots};

        \draw [thick, blue, -] (-22.5,0.5) -- (-14,0.5);
     \draw [thick, blue, -] (-22.5,1) -- (-15.5,1);
     \draw [thick, blue, -] (-22.5,1.5) -- (-17,1.5);
     \draw [thick, blue, -] (-22.5,2) -- (-18.5,2);
     \draw [thick, blue, -] (-22.5,2.5) -- (-20,2.5);
     \draw [thick, blue, -] (-22.5,3) -- (-21.5,3);

           \draw [thick, blue, -] (-13.5,0.5) -- (-2.5,0.5);
     \draw [thick, blue, -] (-13.5,1) -- (-4.5,1);
     \draw [thick, blue, -] (-13.5,1.5) -- (-6.5,1.5);
     \draw [thick, blue, -] (-13.5,2) -- (-8.5,2);
     \draw [thick, blue, -] (-13.5,2.5) -- (-10.5, 2.5);
     \draw [thick, blue, -] (-13.5,3) -- (-12.5,3);

\end{tikzpicture}

\bigskip
As $p$ and $x$ grows, the size of gaps between the intervals in the bottom rows converge to zero. Also, by using a second series with a smaller but appropriately chosen prime $q$, and subtracting the two series from top rows down, we can largely eliminate the intervals in the upper rows. After the procedure,  we will be left with a number of approximately full intervals. For example, if we take $0<q<p<x<q^2<p^2$, then the exact equation will be;

\[(p-q)M_p'(x)+M_p'(\bigcup G)+M_p'(\bigcup L)=-(p-q)\]

where $G$ is the set of all interval gaps in the bottom $p-q$ rows, and $L$ is the set of all leftover intervals in the top $q$ rows. This gives us

\[M_p'(x)=-\frac{1}{p-q}\left(M_p'(\bigcup G)+M_p'(\bigcup L)\right)-1\]

 Then the task is to show that for any $\epsilon >0$, there is an $N$ such that for all  $x\geq N$, there exists primes $p_x$ and $q_x$ such that
 
\[\frac{1}{p_x-q_x}\left(M_{p_x}'(\bigcup G)+M_{p_x}'(\bigcup L)\right)\leq x^{1/2+\epsilon}\]

\section{The Sketch of Proof}

We first define $S_x(u, v)$ ;
\begin{eqnarray}
S_x(u, \quad v)=\{a| u<a<v, \quad \mbox{and }\quad s(a)>x \}
\end{eqnarray}

where $s(a)$ is the smallest prime divisor of $a$. So, $S_x(u, v)$ is the set of integers that are left in the interval $(u, v)$ after we remove the multiples of all primes up to $x$. Then we prove that(Lemma 1)

\[|S_x(kN_p-x, \quad kN_p)|=[\log_px]+1\]

where \begin{equation}
N_p=\prod_{p_i<x, p_i\neq p}p_i
\end{equation}

Since $S_x(0, x)=1$, this gives us

\begin{equation}
|S_x(kN_p-x, \quad kN_p)|-|S_x(0, \quad x)|=[\log_xp] \label{corollary1}
\end{equation}

Then we obtain a second expression for $|S_x(kN_p-x, \quad kN_p)|-|S_x(0, \quad x)|$ using the classical sieve(Lemma 2);

\begin{equation}
|S_x\left(kN_p-x,\quad kN_p\right)|-|S_x\left(0,\quad  x\right)|=\sum_{n=0}^{\pi(x)}(-1)^{k}\sum_{a\in A_k} {\left[R(0, \dfrac{x}{a})- R(\dfrac{kN_p-x}{a}, \dfrac{kN_p}{a})\right]}\label{lem2}
\end{equation}

where $A_k, k\geq 1$ is the set all of square free integers with k-prime factors whose largest prime factor is less than $x$, with $A_0=\{1\}$, and $R(a, b)$  is the residuals defined as
\begin{eqnarray}
% \nonumber % Remove numbering (before each equation)
 R (a, b)&=&(b-a)-N(a, b)
 \end{eqnarray}

with $N(a, b)$ being the number of integers in $(a, b)$. Combining two expressions above we get

\begin{equation}
\sum_{n=0}^{\pi(x)}(-1)^{k}\sum_{a\in A_k} {\left[R(0, \dfrac{x}{a})- R(\dfrac{kN_p-x}{a}, \dfrac{kN_p}{a})\right]}
=[\log_xp]
\label{lem2}
\end{equation}

Now, one of the biggest challenges of  sieve methods is that the number of residual terms grow exponentially, with no useful pattern between them. However, in our case above, we show that $ R(0, \dfrac{x}{a})- R(\dfrac{kN_p-x}{a}, \dfrac{kN_p}{a})$ is either $1$ or $0$ (Lemma 3) which converts the messy series above into a counting function on square free integers. Summing up these series for $k=1 \cdots p-1$ gives (\ref{result})(Theorem)

\section{Lemma 1}

For the rest of the paper we will assume that $x$ is a positive irrational number. Thus we won't have to worry about $x/a$ being equal to an integer when  $a$ is an integer.  Given a positive integer $a$, let $s(a)$ denote the smallest prime divisor of $a$. Let $S_x(u, v)$ be the set of integers between $u$ and $v$ that has no prime divisor less than $x$. That is;

\begin{eqnarray}
S_x(u, \quad v)=\{a| u<a<v, \quad \mbox{and }\quad s(a)>x \}
\end{eqnarray}

In other words, $S_x(u, v)$ is the set of integers that are left in $(u, v)$ after we remove all multiples of primes up to $x$. Thus, for example, we have

\begin{equation}
|S_{\sqrt{x}}(0, x)|= \pi(x)-\pi(\sqrt{x})+1
\end{equation}
 and
\begin{equation}
|S_{x}(0, x)|=1
\end{equation}

where $\pi(x)$ is the number of primes less than $x$, the usual prime counting function.

\begin{definition}
Given a prime $p<x$, let $N_p$ be the product of all primes less than $x$ except for $p$,
\end{definition}

\begin{equation}
N_p=\prod_{p_i<x, p_i\neq p}p_i
\end{equation}

\bigskip

We have;

\begin{lemma}
\begin{equation}
|S_x(kN_p-x, \quad kN_p)|=[\log_px]+1
\label{lem1}\end{equation}
for any $k\leq p-1$.
\end{lemma}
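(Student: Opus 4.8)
The plan is to enumerate the integers in $(kN_p-x,\,kN_p)$ as $a=kN_p-j$, to translate the condition $s(a)>x$ into a divisibility-and-congruence condition on $j$, and to observe that the admissible $j$ are essentially just the powers of $p$ lying below $x$ (with the value $j=1$ requiring separate attention).

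Since $x$ is irrational and $kN_p$ is an integer, $kN_p-x$ is not an integer, so the integers $a$ with $kN_p-x<a<kN_p$ are precisely $a=kN_p-j$ with $j=1,2,\dots,[x]$. The key reduction is this: if $q$ is a prime with $q<x$ and $q\neq p$, then $q\mid N_p$, hence $q\mid kN_p$, so $q\mid(kN_p-j)$ if and only if $q\mid j$; while for $p$ itself, $1\le k\le p-1$ forces $p\nmid k$, and $p\nmid N_p$ by the definition of $N_p$, so $p\nmid kN_p$, whence $p\mid(kN_p-j)$ if and only if $j\equiv kN_p\pmod p$. Combining these, $a=kN_p-j$ has smallest prime factor exceeding $x$ exactly when (i) $j$ has no prime factor $q$ with $q<x$, $q\neq p$, and (ii) $j\not\equiv kN_p\pmod p$.

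Now for the count. Because $1\le j\le[x]<x$, the integer $j$ has no prime factor $\ge x$; together with (i), every prime factor of $j$ must equal $p$, so $j\in\{p^0,p^1,\dots,p^{[\log_p x]}\}$, giving $[\log_p x]+1$ candidates. For $j=p^t$ with $t\ge1$ condition (ii) is automatic, since the left side is $\equiv0\pmod p$ while $p\nmid kN_p$; so each of these $[\log_p x]$ values survives. This leaves only $j=1$; and since $|S_x(0,x)|=1$, once $j=1$ is handled the same argument also yields $|S_x(kN_p-x,kN_p)|-|S_x(0,x)|=[\log_p x]$.

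The step I expect to be the crux is precisely the value $j=1$: it lies in $S_x(kN_p-x,kN_p)$ if and only if $p\nmid kN_p-1$, i.e.\ $kN_p\not\equiv1\pmod p$, and it is this single term that upgrades $[\log_p x]$ to the claimed $[\log_p x]+1$. So the heart of the lemma is to show $kN_p\not\equiv1\pmod p$ for all $k$ in the relevant range (or to restrict that range so the congruence cannot occur); the divisibility reduction of the second paragraph and the prime-power count of the third are then routine bookkeeping. Accordingly, I would put most of the effort into nailing down this residue condition.
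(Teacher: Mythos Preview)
Your decomposition $a=kN_p-j$ with $1\le j\le [x]$, the reduction to ``$j$ has no prime factor $<x$ other than $p$'' (forcing $j\in\{1,p,p^2,\dots\}$), and the separate check of $p\nmid a$ are exactly the paper's argument; the paper writes $b$ where you write $j$ but otherwise proceeds identically.

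Where you and the paper differ is precisely the point you flag. The paper's proof simply asserts that when $b\in\{1,p,p^2,\dots\}$ the number $kN_p-b$ is ``not divisible by any prime $<x$'', without ever checking the case $b=1$ against the prime $p$. You are right to isolate this: it is the only nontrivial step, and in fact it \emph{cannot} be carried out in the stated generality. Since $\gcd(N_p,p)=1$, the map $k\mapsto kN_p\pmod p$ is a bijection on $\{1,\dots,p-1\}$, so there is exactly one $k^\ast\in\{1,\dots,p-1\}$ with $k^\ast N_p\equiv 1\pmod p$; for that $k^\ast$ one has $p\mid k^\ast N_p-1$, the term $j=1$ drops out, and $|S_x(k^\ast N_p-x,\,k^\ast N_p)|=[\log_p x]$ rather than $[\log_p x]+1$. (A small check: $p=3$, $x\approx 10.14$, $N_3=2\cdot5\cdot7=70\equiv 1\pmod 3$; for $k=1$ the set is $\{61,67\}$, of size $2=[\log_3 x]$, while for $k=2$ it is $\{131,137,139\}$, of size $3$.)

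So your instinct is correct and sharper than the paper's proof: the residue condition you highlight is not merely the ``crux'' but is actually violated for one value of $k$, and the lemma as written needs either an exclusion of that $k$ or a $+1$/$+0$ adjustment depending on whether $kN_p\equiv 1\pmod p$. The remainder of your argument (the counting of prime powers and the automatic survival of $j=p^t$, $t\ge 1$) is complete and matches the paper.
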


\begin{proof}

Let $a\in (kN_p-x, \quad kN_p)$, an integer. Then $a=kN_p-b$ for some $0<b<x$.

 Suppose $b$ has a prime divisor different than $p$, say $c$. Then $0<c<x$, and $c/kN_p$ by definition of $N_p$, which implies $c/(kN_p-b)=a$. Thus
$a\notin S_x(kN_p-x, \quad kN_p)$ by definition of $S_x$.

Now suppose $b$  doesn't have a prime divisor different than $p$. Then $b=1$ or $b=p^n$ for some $n$. But then $a=kN_p-p^k$, is not divisible by any prime $<x$, which implies  $a\in S_x(kN_p-x, \quad kN_p)$.

Thus, the elements of $S_x(kN_p-x, \quad kN_p)$ are precisely those $a$'s where $a=kN_p-p^n$ for some n with $p^n<x$. Thus,
\begin{equation}
S_x(kN_p-x, \quad kN_p) =\{kN_p-1,\quad  kN_p-p, \quad kN_p-p^2\quad \cdots \quad kN_p-p^n\}
\end{equation}
where $p^n<x$; and $ |S_x(kN_p-x, \quad kN_p)|=[\log_px]+1 $
\end{proof}
\bigskip

\begin{corollary}
\begin{equation}
|S_x(kN_p-x, \quad kN_p)|-|S_x(0, \quad x)|=\log_xp \label{corollary1}
\end{equation}
\end{corollary}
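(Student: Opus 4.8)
The plan is to obtain the corollary as an immediate consequence of Lemma~1 together with the elementary evaluation $|S_x(0,\,x)|=1$ already recorded in this section. The argument is a one-line arithmetic substitution, so I do not expect a genuine obstacle here; the only point worth spelling out for completeness is the justification of $|S_x(0,\,x)|=1$, which I would include before the substitution.

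First I would recall why $|S_x(0,\,x)|=1$. Since $x$ is irrational, the integers lying in the open interval $(0,x)$ are exactly $1,2,\dots,[x]$. The integer $a=1$ has no prime divisor and hence vacuously satisfies $s(1)>x$, so $1\in S_x(0,\,x)$. Any integer $a$ with $2\le a\le[x]$ satisfies $s(a)\le a<x$, so the condition $s(a)>x$ fails and $a\notin S_x(0,\,x)$. Therefore $S_x(0,\,x)=\{1\}$ and $|S_x(0,\,x)|=1$.

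Next I would apply Lemma~1 to the first term: for any $k\le p-1$ one has $|S_x(kN_p-x,\ kN_p)|=[\log_px]+1$. Subtracting the two evaluations then yields
\[
|S_x(kN_p-x,\ kN_p)|-|S_x(0,\ x)|=\bigl([\log_px]+1\bigr)-1=[\log_px],
\]
which is the asserted identity (the right-hand side being $[\log_px]$). This completes the proof. In short, the corollary carries no difficulty of its own: its entire content is already contained in Lemma~1, namely the fact that the integers surviving in $(kN_p-x,\ kN_p)$ after removing all multiples of primes below $x$ are precisely the numbers $kN_p-p^n$ with $p^n<x$, of which there are $[\log_px]+1$.
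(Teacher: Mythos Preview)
Your argument is correct and is exactly the intended one: the paper states the corollary immediately after Lemma~1 with no separate proof, relying on Lemma~1 together with the earlier observation $|S_x(0,x)|=1$, which is precisely the substitution you carry out. Your remark that the right-hand side should read $[\log_p x]$ (rather than the paper's $\log_x p$) is also well taken.
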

\bigskip

\section{Lemma 2}

The difference $|S_x(kN_p-x, \quad kN_p)|-|S_x(0, \quad x)|$ is key to our analysis. As we will see, it has some very useful properties that simplifies the sieving procedure a great deal.  We will now use  the sieve of Eratosthenes to obtain an expression  for  $|S_x(0, x)|$ and $|S_x(kN_p-x, \quad kN_p)|$ in terms of square free integers up to $x$. In the classical sieve, we first remove all multiples of $2$, then remove all multiples of $3$ from the remaining integers,and so on, up to the last prime less than $x$. When we account for all possible overlappings between different sequences of prime multiples, this procedure is equivalent to the following  \\

$|S_x(0, x)|$= (The number of integers in $(0, x)$) \\
\hspace{1in}$-$ (the number of multiples of 2, the number of multiples  3, $ \cdots$ in $(0, x)$) \\
$ +$ (the number of multiples of $2\cdot 3$, the number of multiples $2\cdot 5$, $\cdots$ in $(0, x)$) \\
$-$ (the number of multiples of $2\cdot 3\cdot 5$, the number of multiples of $2\cdot 5\cdot 7$, $\cdots$ in $(0, x)$) $\cdots$\\

We continue until we consume all square free products of all primes less than $x$, alternating signs. To calculate  $|S_x(kN_p-x, \quad kN_p)|$ this way, we just count the multiples of these products in the interval $(kN_p-x, \quad kN_p)$ instead of $(0, x)$. We will now formalize this procedure.

Let $N(a, b)$ be the number of integers between $a$ and $b$, and  $N_m(a, b)$ be the number of multiples of an integer $m$ in $(a, b)$. We have

 \begin{equation}
N_m(a, b)=N(\frac{a}{m}, \frac{b}{m})\label{nm}
\end{equation}

Define  $R(a, b)$  as follows;

\begin{eqnarray}
% \nonumber % Remove numbering (before each equation)
 R (a, b)&=&(b-a)-N(a, b)
 \end{eqnarray}

 Note that in case of $a=0$, $R(0, b)$ represents the usual residual part of $b$, thus, a positif number between $0$ and $1$. However, for a general interval $(a, b)$, it can be negative as well. Take the interval $(2.8, 3.1)$ for example. We have $N(2.8, 3.1)=1$, but  $R(2.8, 3.1)=(3.1-2.8)-N(2.8, 3.1)=0.3-1=-0.7$

Let $A_n, n\geq 1$ be the set all of square free integers with n-prime factors whose largest prime factor is less than $x$, and define $A_0=\{1\}$.  That is,
\begin{eqnarray*}
% \nonumber % Remove numbering (before each equation)
A_o &=& \{1\} \\
  A_1 &=& \{p_i |\quad  p<x, prime\} \\
  A_2 &=& \{p_i\cdot p_j| p_i, p_j< x, primes\} \\
  &\vdots&
\end{eqnarray*}
Let $m(a)$ denote the largest prime divisor of $a$.. Then we can write
 \begin{equation}
A_n=\{a|\mu(a)\neq 0, \omega(a)=n, m(a)<x\}\\
\end{equation}

\bigskip
We can express the sieving procedure above as follows;
\begin{lemma}

\begin{equation}
|S_x\left(kN_p-x,\quad kN_p\right)|-|S_x\left(0,\quad  x\right)|=\sum_{n=0}^{\pi(x)}(-1)^{k}\sum_{a\in A_k} {\left[R(0, \dfrac{x}{a})- R(\dfrac{kN_p-x}{a}, \dfrac{kN_p}{a})\right]}\label{lem2}
\end{equation}
For any integer $k\leq p-1$,
\end{lemma}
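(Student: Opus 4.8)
The plan is to apply the classical Legendre (Eratosthenes) sieve identity separately to the two sets $S_x(0,x)$ and $S_x(kN_p-x,\,kN_p)$ and then subtract, exploiting the fact that the two underlying intervals have the \emph{same} length $x$, so that the ``main terms'' cancel and only the residual terms $R$ survive.

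First I would record, in precise form, the inclusion--exclusion already described informally in the text. Put $P=\prod_{q<x}q$, the product of all primes $q<x$. For an integer $a\geq 1$ one has $\sum_{d\mid\gcd(a,P)}\mu(d)=1$ when $a$ has no prime divisor $<x$ (i.e.\ $s(a)>x$) and $=0$ otherwise. Summing this over the integers $a$ in any interval $(u,v)$ with $u\geq 0$ and interchanging the (finite) order of summation gives $|S_x(u,v)|=\sum_{d\mid P}\mu(d)\,N_d(u,v)$. The divisors of $P$ are exactly the square-free integers whose prime factors are all $<x$, i.e.\ the disjoint union $\bigcup_{n=0}^{\pi(x)}A_n$, and $\mu(d)=(-1)^{n}$ for $d\in A_n$; hence
\[
|S_x(u,v)|=\sum_{n=0}^{\pi(x)}(-1)^{n}\sum_{a\in A_n}N_a(u,v).
\]

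Next I would convert each count to a residual. By \eqref{nm}, $N_a(u,v)=N(u/a,v/a)$, and by the definition of $R$, $N(u/a,v/a)=(v-u)/a-R(u/a,v/a)$, so
\[
|S_x(u,v)|=\sum_{n=0}^{\pi(x)}(-1)^{n}\sum_{a\in A_n}\left(\frac{v-u}{a}-R\!\left(\frac{u}{a},\frac{v}{a}\right)\right).
\]
Now apply this with $(u,v)=(0,x)$ and with $(u,v)=(kN_p-x,\,kN_p)$; the latter is a legitimate interval of positive integers since $kN_p-x>0$ whenever $x<N_p$, which holds for $x$ larger than an absolute constant (and $k\geq 1$). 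Both intervals have length exactly $x$, so the terms $(v-u)/a=x/a$ coincide in the two expansions and cancel term by term upon subtraction, leaving precisely the asserted identity (the outer summation index and the exponent of $-1$ are both $n$; the symbol $k$ there denotes the fixed multiplier in $kN_p$, not a summation variable). Note that the hypothesis $k\leq p-1$ plays no role in this step — it is needed only for Lemma 1 — and the irrationality of $x$ is likewise inessential here, though it conveniently forces $R(0,x/a)$ to equal the ordinary fractional part of $x/a$.

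The only delicate point — and hence the ``main obstacle,'' though it is a mild one — is pinning down the bookkeeping in the sieve identity: one must verify that the interchange of summations is valid (it is, since the $d$-sum is finite) and that all intervals are treated as open consistently, so that $N_a(u,v)=N(u/a,v/a)$ holds exactly even when $v/a$ is an integer, as happens for $v=kN_p$ with $a\mid N_p$. Once this convention is fixed, the cancellation of the main terms $x/a$ is automatic and the lemma follows.
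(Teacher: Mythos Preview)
Your proof is correct and follows essentially the same route as the paper: apply inclusion--exclusion to both intervals to express each $|S_x|$ as an alternating sum of counts $N(u/a,v/a)$, rewrite each count as $(v-u)/a-R(u/a,v/a)$, and subtract, so that the main terms $x/a$ cancel because the two intervals have equal length $x$. Your version is slightly more careful than the paper's about the summation-index clash (the paper's $n$ versus $k$) and about which hypotheses are actually used, but the argument is the same.
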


\begin{proof}
By (\ref{nm}), the sieving procedure above translates into

\begin{eqnarray}
|S\left(0,\quad  x\right)|&=&N(0, x)-\sum_{a\in A_1} {N_a(0, x)}+\sum_{a\in A_2} {N_a(0, x)}-\sum_{a\in A_3} {N_a(0, x)}+\cdots \\
&=&N(0, x)-\sum_{a\in A_1} {N(0, \dfrac{x}{a})}+\sum_{a\in A_2} {N(0, \dfrac{x}{a})}-\sum_{a\in A_3} {N(0, \dfrac{x}{a})}+\cdots \\
&=&\sum_{n=0}^{\pi(x)}(-1)^{k}\sum_{a\in A_k} {N(0, \dfrac{x}{a})} \label{S1}
\end{eqnarray}

Similarly,

\begin{eqnarray}|S\left(kN_p-x,\quad  kN_p\right)|&=&N\left(kN_p-x,\quad  kN_p\right)-\sum_{a\in A_1} {N(\dfrac{kN_p-x}{a}, \dfrac{kN_p}{a})}+\sum_{a\in A_2} {N(\dfrac{kN_p-x}{a}, \dfrac{kN_p}{a})}-\cdots \\ &=& \sum_{n=0}^{\pi(x)}(-1)^{k} \sum_{a\in A_k} {N(\dfrac{kN_p-x}{a}, \dfrac{kN_p}{a})}\\\label{S2}\end{eqnarray}

Thus,
\begin{eqnarray}
|S\left(kN_p-x,  kN_p\right)|-|S(0, x)|
&=&\sum_{n=0}^{\pi(x)}(-1)^{k} \sum_{a\in A_k} \left[ N(\dfrac{kN_p-x}{a}, \dfrac{kN_p}{a})- {N(0, \dfrac{x}{a})}\right]  \label{lem2p1}
\end{eqnarray}

On the other hand, by definition,
\[ N(0, \frac{x}{a})=\frac{x}{a}-R(0, \frac{x}{a}) \]
and
\begin{eqnarray*}
N(\dfrac{kN_p-x}{a}, \dfrac{kN_p}{a})&=&\left(\frac{kN_p}{a}-\frac{kN_p-x}{a}\right)-R(\dfrac{kN_p-x}{a}, \dfrac{kN_p}{a})\\\\
&=&\frac{x}{a}-R(\dfrac{kN_p-x}{a}, \dfrac{kN_p}{a})\\
\end{eqnarray*}

Therefore,
\begin{eqnarray}
N(\dfrac{kN_p-x}{a}, \dfrac{kN_p}{a})- {N(0, \dfrac{x}{a})}&=&\frac{x}{a}-R(\dfrac{kN_p-x}{a}, \dfrac{kN_p}{a})-
\left(\frac{x}{a}-R(0, \frac{x}{a})\right)\\&=& R(0, \dfrac{x}{a})-R(\dfrac{kN_p-x}{a}, \dfrac{kN_p}{a}) \label{nr}
\end{eqnarray}

Substituting this  into (\ref{lem2p1}) gives us (\ref{lem2}).

\end{proof}
\bigskip

\section{Lemma 3}
Here is the key observation about the residuals in (\ref{lem2}).

\begin{lemma}
Given a square free integer $a$, irrational $x$, prime $p<x$ and $k\leq p-1$, we have

\[ R(0, \dfrac{x}{a})-R(\dfrac{kN_p-x}{a}, \dfrac{kN_p}{a})=\begin{cases}
       1, \quad  & if \quad p/a \quad and \quad  R(0, \dfrac{x}{a})> R(0, \dfrac{kN_p}{a}) \\\\
      0, \quad &\mbox{if}\quad  p/a \quad and \quad   R(0, \dfrac{x}{a})\leq R(0, \dfrac{kN_p}{a}) \\\\
      0, \quad & if \quad p\nmid a\\\\
   \end{cases}
\]\label{lem3}
\end{lemma}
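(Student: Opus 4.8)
The plan is to evaluate every $R$-value as a fractional part and then compare. Throughout, write $\{t\}$ for the fractional part of a real $t$. Before anything else I would record that the lemma needs (and is only used with) the hypothesis that every prime factor of $a$ is less than $x$; equivalently $a\mid pN_p$. This is exactly the case that enters the sieve expansion of the previous lemma --- outside it, $a$ belongs to no $A_k$ --- and it is genuinely needed, since it is what forces $a\mid N_p$ when $p\nmid a$. With that understood, the one fact about $R$ I would use is the elementary identity, valid whenever $u\notin\mathbb Z$:
\[
R(u,v)=\{v\}-\{u\}\ \text{ if } v\notin\mathbb Z,\qquad R(u,v)=1-\{u\}\ \text{ if } v\in\mathbb Z ,
\]
proved by listing the integers in $(u,v)$, which run from $\lfloor u\rfloor+1$ to $\lfloor v\rfloor$ in the first case but only to $\lfloor v\rfloor-1$ in the second. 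Since $x$ is irrational, $x/a$ and $(kN_p-x)/a$ are irrational, so $R(0,x/a)=\{x/a\}$ and the left endpoint of the second interval is never an integer; thus the evaluation of $R\bigl((kN_p-x)/a,\ kN_p/a\bigr)$ turns entirely on whether $kN_p/a$ is an integer, and I would split the argument on whether $p\mid a$.

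If $p\nmid a$, then $a\mid N_p$, so $kN_p/a\in\mathbb Z$ and the identity gives $R\bigl((kN_p-x)/a,\ kN_p/a\bigr)=1-\{(kN_p-x)/a\}=1-\{-x/a\}=1-\bigl(1-\{x/a\}\bigr)=\{x/a\}$, using that $kN_p/a\in\mathbb Z$ and $x/a\notin\mathbb Z$. Hence $R(0,x/a)-R\bigl((kN_p-x)/a,\ kN_p/a\bigr)=0$, which is the third case.

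If $p\mid a$, write $a=pb$ with $b$ square-free and $p\nmid b$, so $b\mid N_p$ and $kN_p/a=k(N_p/b)/p$; since $N_p/b$ is a product of distinct primes other than $p$ and $1\le k\le p-1$, we get $p\nmid k(N_p/b)$, hence $kN_p/a\notin\mathbb Z$. Now the identity gives $R\bigl((kN_p-x)/a,\ kN_p/a\bigr)=\{kN_p/a\}-\{(kN_p-x)/a\}$, and I would close using the fractional-part subtraction rule: as $\{kN_p/a\}$ is rational while $\{x/a\}$ is irrational, they are unequal, and $\{kN_p/a-x/a\}$ equals $\{kN_p/a\}-\{x/a\}$ if $\{kN_p/a\}>\{x/a\}$ and equals $\{kN_p/a\}-\{x/a\}+1$ if $\{kN_p/a\}<\{x/a\}$. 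Substituting, $R\bigl((kN_p-x)/a,\ kN_p/a\bigr)$ is $\{x/a\}$ in the first case and $\{x/a\}-1$ in the second, so $R(0,x/a)-R\bigl((kN_p-x)/a,\ kN_p/a\bigr)$ is $0$ or $1$ respectively. Finally, since $kN_p/a\notin\mathbb Z$ we have $R(0,kN_p/a)=\{kN_p/a\}$, so the condition $\{kN_p/a\}<\{x/a\}$ is precisely $R(0,x/a)>R(0,kN_p/a)$ and the complementary condition gives $R(0,x/a)\le R(0,kN_p/a)$; these are the first two cases of the lemma.

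Each individual step is a short computation; the place to be careful --- and, I expect, the actual content --- is the endpoint bookkeeping, namely the extra $+1$ in the formula for $R(u,v)$ when $v\in\mathbb Z$, together with the two facts that pin down when the right endpoint is an integer: that $a\mid N_p$ precisely when $p\nmid a$ (this is where the standing hypothesis on the prime factors of $a$ is used) and that $p\nmid k$ because $k\le p-1$. Once those are in place, the three cases fall out of the fractional-part identities with no estimation at all.
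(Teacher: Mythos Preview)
Your argument is correct and is essentially the paper's own proof recast in fractional-part language: the paper writes $x/a=n_1+r_1$, $kN_p/a=n_2+r_2$ and in each of the three cases lists and counts the integers in $(n_2-n_1+r_2-r_1,\,n_2+r_2)$ directly, which is exactly your identity $R(u,v)=\{v\}-\{u\}$ (or $1-\{u\}$ when $v\in\mathbb Z$) followed by the $\{s-t\}$ subtraction rule. Your explicit flagging of the tacit hypothesis that every prime factor of $a$ is below $x$ (so that $p\nmid a\Rightarrow a\mid N_p$) is on point --- the paper relies on it in Case~3 without stating it in the lemma.
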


\bigskip

\begin{proof}

\textbf{Case 1: }Suppose $p/ a$ and $ R(0, \frac{x}{a})> R(0, \frac{kN_p}{a})$. Then  $p/a$ implies  $\frac{kN_p}{a}$ is not an integer since $p\nmid N_p$, and $k\leq p-1$.  Let
\[\frac{x}{a}=n_1+r_1  \quad \mbox{and}\quad  \frac{kN_p}{a}=n_2+r_2 \]

where $0< r_1, r_2<1$. We have $r_2\neq 0$ by the assumption, and  $r_1\neq 0$ since $x$ is irrational.  Note that

\[N(0, \frac{x}{a})=n_1, R(0, \frac{x}{a})=r_1 \quad \mbox{and}\quad N(0, \frac{kN_p}{a}) =n_2, R(0, \frac{kN_p}{a})=r_2\]

Thus $ R(0, \frac{x}{a})> R(0, \frac{kN_p}{a})$ implies   $r_1> r_2$ . Then,

\begin{eqnarray*}
% \nonumber % Remove numbering (before each equation)
 N (\dfrac{kN-x}{a}, \dfrac{kN}{a}) &=& N(n_2-n_1+r_2-r_1, \quad n_2+r_2)
\end{eqnarray*}

The integers in the interval $(n_2-n_1+r_2-r_1, \quad n_2+r_2) $ are given by
\[\{n_2-n_1, \quad n_2-n_1+1, \quad n_2-n_1+2, \quad \cdots n_2-1, \quad n_2 \}\]

since $r_2-r_1<0$. The number of terms in the list above is

\[n_2-(n_2-n_1)+1=n_1+1\]

Thus,

\begin{eqnarray*}
% \nonumber % Remove numbering (before each equation)
 N(\dfrac{kN-x}{a}, \dfrac{kN}{a})&=&n_1+1\\&=&N(0, \frac{x}{a})+1
\end{eqnarray*}

which implies, by (\ref{nr}),

\begin{eqnarray*}
 R(0, \frac{x}{a})-R(\dfrac{kN-x}{a}, \dfrac{kN}{a})&=& N(\dfrac{kN-x}{a}, \dfrac{kN}{a})-N(0, \dfrac{x}{a})\\ \\& = &1\\
\end{eqnarray*}

\textbf{Case 2.} Now suppose $p/ a$ but $R(0, \frac{x}{a}) \leq R(0, \frac{kN_p}{a})$. Let \[\frac{x}{a}=n_1+r_1 \quad  \mbox{and} \quad \frac{kN_p}{a}=n_2+r_2\].

for integers $n_1, n_2$ and $0< r_1, r_2<1$. We have $r_1\leq r_2$ by the assumption. \\

Then
\begin{eqnarray*}
% \nonumber % Remove numbering (before each equation)
  N(\dfrac{kN-x}{a}, \dfrac{kN}{a}) &=& N(n_2-n_1+r_2-r_1, \quad n_2+r_2)
\end{eqnarray*}

The integers in the interval above are given by

\[\{n_2-n_1+1, \quad n_2-n_1+1, \quad n_2-n_1+2, \quad \cdots n_2-1, \quad n_2 \}\]

Since $0\leq  r_2-r_1<1$ . The number of integers in the list is $n_2-(n_2-n_1+1)+1=n_1$

Thus,

\begin{eqnarray*}
% \nonumber % Remove numbering (before each equation)
 N(\dfrac{kN-x}{a}, \dfrac{kN}{a})&=&n_1\\&=&N(0, \frac{x}{a})
\end{eqnarray*}

and

\begin{eqnarray*}
 R(0, \frac{x}{a})-R(\dfrac{kN-x}{a}, \dfrac{kN}{a})&=& N(\dfrac{kN-x}{a}, \dfrac{kN}{a})-N(0, \dfrac{x}{a})\\ \\& = &0\\
\end{eqnarray*}

Case 3: Suppose $ p\nmid a$. Then $kN_p/p$ is an integer since $kN_p$ contains all primes but $p$. Let \[\frac{kN_p}{a}=n_2 \quad  \mbox{and} \quad \frac{x}{a}=n_1+r_1\]  for some integers $n_1, n_2$ and  $0< r_1<1$. Then

\begin{eqnarray*}
% \nonumber % Remove numbering (before each equation)
  N(\dfrac{kN-x}{a}, \dfrac{kN}{a}) &=& N(\dfrac{kN}{a}-\frac{x}{a}, \dfrac{kN}{a})\\
 &=&N (n_2-n_1-r_1, \quad n_2)
\end{eqnarray*}

The integers in the interval above are given by

\[\{n_2-n_1, \quad n_2-n_1+1, \quad n_2-n_1+2, \quad \cdots n_2-1 \}\]

The number of terms is

\[n_2-1-(n_2-n_1)+1=n_1\]

Thus

\begin{eqnarray*}
% \nonumber % Remove numbering (before each equation)
 N(\dfrac{kN-x}{a}, \dfrac{kN}{a})
 &=&n_1\\&=& N(0, \dfrac{x}{a})
\end{eqnarray*}

And
\begin{eqnarray*}
 R(0, \dfrac{x}{a})-R(\dfrac{kN-x}{a}, \dfrac{kN}{a})&= &N(\dfrac{kN-x}{a}, \dfrac{kN}{a})-N(0, \dfrac{x}{a})\\&=&0 \\
\end{eqnarray*}

\end{proof}

\section{The Main Theorem}

Let
\begin{equation}
B_k= \{a|m(a)<x, p/a,  R(0, \dfrac{ x}{a})> r(0, \dfrac{kN_p}{a})\}
\end{equation}

Then the lemma above implies

\begin{equation}
|S_x(\left(kN_p-x,\quad kN_p\right)|-|S_x\left(0,\quad  x\right)|=M(B_k) \label{lem3res}
\end{equation}

where $M$ is the Merten's function defined on the set $B_k$.
\bigskip

While  lemma (\ref{lem3}) is significant in that it transforms a complicated residual sum into a counting function, we don't know anything about the structure of $B_k$ explicitly,   It turns out, however, summing up (\ref{lem3res}) for all $k\leq p-1$ gives us exactly what we need.

Before we prove our main result, we need one more variant of $M$.  Let

\begin{eqnarray}
M_p^x(u, v)&=& \sum_{u<a<v, p/a, m(a)<x}\mu(a)
\end{eqnarray}

Namely, $M_p^x(u, v)$ is the set of square free integers in $(u, v)$ that are divisble by $p$ with no prime factors greater than $x$. 
\bigskip

\begin{thm}
Given a positive irrational number $x$, and a prime $p$ with $p<x$, and $N_p$ defined above, we have
\begin{eqnarray}\sum_{k=1}^{p-1}[|S\left(kN_p-x,\quad  kN_p\right)|-|S\left(0,\quad  x\right)|]
&=&-\sum_{n=1}^{m} \sum_{i=1}^{p-1} M'_{p}(\frac{x}{np},  \quad \frac{x}{np-i})\label{thmmain}\\
&=&([\log_px])(p-1)
\end{eqnarray}
\end{thm}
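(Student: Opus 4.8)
The statement bundles two equalities. The second one, that the sum equals $([\log_p x])(p-1)$, is immediate: Lemma~1 gives $|S(kN_p-x,kN_p)|=[\log_p x]+1$ and $|S(0,x)|=1$, so each of the $p-1$ summands equals $[\log_p x]$. So the plan is entirely about the first equality. First I would collapse the left side to a single weighted M\"obius sum. Combining Lemma~2 with Lemma~3 and using $(-1)^{\omega(a)}=\mu(a)$ for square-free $a$, each summand becomes $|S(kN_p-x,kN_p)|-|S(0,x)|=\sum\mu(a)$ over square-free $a$ with $m(a)<x$, $p\mid a$, and $\{x/a\}>\{kN_p/a\}$ (this is (\ref{lem3res}), reading $R(0,y)$ as the fractional part $\{y\}$, which is legitimate since $x$ is irrational and, when $p\mid a$, $kN_p/a$ is not an integer). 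Summing over $k=1,\dots,p-1$ and swapping the order of summation turns the left side into $\sum_a\mu(a)\cdot\#\{1\le k\le p-1:\{x/a\}>\{kN_p/a\}\}$, the sum ranging over square-free $a$ with $p\mid a$ and $m(a)<x$.

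Next I would evaluate that inner count. Writing $a=pb$ with $b$ square-free and coprime to $p$, the hypothesis $m(a)<x$ forces every prime of $b$ to lie among the primes below $x$ distinct from $p$, hence $b\mid N_p$; writing $N_p=bM$ with $M$ coprime to $p$ gives $kN_p/a=kM/p$, so $\{kN_p/a\}$ runs over $\{1/p,2/p,\dots,(p-1)/p\}$ exactly once as $k$ runs over $1,\dots,p-1$. Therefore the count equals $\#\{1\le j\le p-1:j/p<\{x/a\}\}=\lfloor p\{x/a\}\rfloor$, and a short floor manipulation (using $\lfloor x/(pb)\rfloor=\lfloor\lfloor x/b\rfloor/p\rfloor$) rewrites this as $\lfloor x/b\rfloor-p\lfloor x/(pb)\rfloor=\lfloor x/b\rfloor\bmod p$. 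Since $\mu(pb)=-\mu(b)$, and since any square-free $b$ with a prime factor $\ge x$ satisfies $b>x$ and hence $\lfloor x/b\rfloor=0$ (so the constraint $m(b)<x$ may be dropped), the left side becomes $-\sum_{b\ge1,\,p\nmid b}\mu(b)\,(\lfloor x/b\rfloor\bmod p)$.

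Finally I would show the double sum on the right equals $\sum_{b\ge1,\,p\nmid b}\mu(b)\,(\lfloor x/b\rfloor\bmod p)$, which closes the loop. Because no $x/\ell$ is an integer, I would split each interval $(x/(np),\,x/(np-i))$ into the unit-step pieces $(x/(\ell+1),\,x/\ell)$ for $\ell=np-1,\dots,np-i$, sum over $i$ first so that $M'_p(x/(\ell+1),x/\ell)$ acquires the coefficient $\ell-(n-1)p=\ell\bmod p$, and then sum over $n=1,\dots,m$; since $m=[x/p]+2$ makes $mp>x$, the blocks $((n-1)p,np)$ exhaust all $\ell$ that can occur as $\lfloor x/b\rfloor$. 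Using $x/(\ell+1)<b<x/\ell\iff\lfloor x/b\rfloor=\ell$, the double sum becomes $\sum_{\ell\ge1}(\ell\bmod p)\sum_{\lfloor x/b\rfloor=\ell,\,p\nmid b}\mu(b)=\sum_{b\ge1,\,p\nmid b}\mu(b)\,(\lfloor x/b\rfloor\bmod p)$, as wanted; comparison with the previous paragraph gives the first equality.

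The conceptual crux is the fractional-part count: the precise form $N_p=\prod_{p_i<x,\,p_i\neq p}p_i$ is exactly what makes $\{kN_p/a\}$ run through the nonzero multiples of $1/p$ once each, converting an inequality into the clean quantity $\lfloor x/b\rfloor\bmod p$. The most error-prone part is the bookkeeping in the last step --- the interval decomposition, the reindexing $\ell=np-j$, checking that the surviving coefficient is precisely $\ell\bmod p$, and confirming that $m=[x/p]+2$ is large enough while the surplus blocks contribute nothing. Everything else is routine floor and fractional-part algebra, valid because irrationality of $x$ keeps every endpoint off the integers.
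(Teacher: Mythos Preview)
Your argument is correct and rests on the same conceptual crux as the paper's: once Lemmas~2 and~3 reduce the left side to $\sum_a\mu(a)\cdot\#\{k:\{x/a\}>\{kN_p/a\}\}$, the key point is that for $a=pb$ with $p\nmid b$ the values $\{kN_p/a\}$ permute $\{1/p,\dots,(p-1)/p\}$. Where you diverge is in the bookkeeping. The paper stays in the $a$-variable: it partitions the square-free $a$'s by which interval $(\tfrac{x}{n-(i-1)/p},\tfrac{x}{n-i/p})$ they land in, reads off the multiplicity $p-i$ geometrically, obtains $\sum_n\sum_i(p-i)M_p^x(\tfrac{x}{n-(i-1)/p},\tfrac{x}{n-i/p})$, telescopes this to $\sum_n\sum_i M_p^x(\tfrac{x}{n},\tfrac{x}{n-i/p})$, and only at the end substitutes $a=pb$ via $M_p(u,v)=-M'_p(u/p,v/p)$. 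You instead pass to $b=a/p$ immediately, compute the multiplicity in closed form as $\lfloor p\{x/a\}\rfloor=\lfloor x/b\rfloor\bmod p$, and then meet the right side in the middle by showing it also equals $\sum_{p\nmid b}\mu(b)(\lfloor x/b\rfloor\bmod p)$ through the unit-step decomposition. Your route is a bit more algebraic and yields the pleasant intermediate identity $\sum_{k=1}^{p-1}M(B_k)=-\sum_{p\nmid b}\mu(b)(\lfloor x/b\rfloor\bmod p)$; the paper's route keeps the interval picture in view throughout, which is what motivates the figure and the ``strategy'' discussion in Section~2. Both handle the range issues (dropping $m(b)<x$, and $m=[x/p]+2$ being large enough) the same way.
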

where $m=[x/p]+2$. Note that the intervals above will not contain any integer for $n>m$.

\begin{proof}

By the lemma (\ref{lem3}), the summation above is just a counting function for square free integer $a$'s ,with $p/a$ , $m(a)<x$, and the condition

 \begin{equation}
R(0, \frac{x}{a})> R(0, \frac{kN_p}{a}) \label{condr}
\end{equation}

for some $0<k\leq p-1$. If a square free integer $a$ satisfies the condition above for a specific $k$, it will contribute to the left side of (\ref{thmmain}) by $\mu(a)$.  Note that for any specific $a$, the condition (\ref{condr})  can be satisfied for more than one $k$. In that case, the number $\mu(a)$ has to be multiplied by the number of such $k$'s.\\

Now, if  $p/a$, then $a=pb$ for some integer $b$ with $p\nmid b$. Since $p\nmid b$, we have $b/N_p$. Let $N_p/b=M$ for some integer $M$.  Then we have

\begin{equation}
\frac{kN_p}{a}=\frac{kN_p}{bp}=k\frac{M}{p}=n_k+\frac{i_k}{p}
\end{equation}

 for some integers $n_k, i_k$ and $1\leq i_k\leq  p-1$. Therefore

\begin{equation}
R(0, \frac{kN_p}{a})=\frac{i}{p}
\end{equation}

for some $i=1,2, \cdots p-1$. Note that since $p$ is prime, $ k_1\neq k_2 \Rightarrow i_{k_1}\neq i_{k_2}$. Therefore the set \[\{R(0, \frac{kN_p}{a})| k=1, 2, \cdots p-1\}\] will be a permutation of \[\{\frac{i}{p}| i=1, 2, \cdots p-1\} \]

Consider the following disjoint set of intervals

\[\{(n-\frac{i-1}{p}, \quad  n-\frac{i}{p})| n=1, 2, \cdots =\infty \quad \mbox{and}\quad  i=1, 2\cdots p\}\] This set contains all positive irrational numbers, thus, it contains the fraction "$x/a$" for any irrational $x$, and any integer $a$. This implies that the set of intervals

\begin{equation}
(px, \quad \infty)\bigcup \{(\frac{x}{n-(i-1)/p}, \quad \frac{x}{n-i/p})|n=1, 2, \cdots =\infty \quad \mbox{and}\quad  i=1, 2\cdots p \quad (n, i)\neq (1, p)\} \label{interval}
\end{equation}

contains all integers since  \[n-(i-1)p< x/a< n-i/p\]implies
\[a\in (\frac{x}{n-(i-1)/p},  \quad \frac{x}{n-i/p})\]

 We will calculate the contribution of each square free integer  $a$ to (\ref{thmmain})  based on each of these intervals. As we will see, the square free integers in the same interval will contribute to the sum by the same exact amount.\\

Now for a given integer $n$, suppose $a\in (\frac{x}{n},  \frac{x}{n-1/p})$, a square free integer  with $p/a$ and $m(a)<x$. Then

\begin{eqnarray}
\frac{x}{n} < a < \frac{x}{n-1/p} &\Rightarrow & n-\frac{1}{p} <\frac{x}{a}< n \\
 &\Rightarrow&  \frac{p-1}{p} < R(0, \frac{x}{a})< 1
\end{eqnarray}
This implies $R(0, \frac{x}{b})> R(0, \frac{kN_p}{a})$ for ever value of $k$, since $R(0, \frac{kN_p}{a})=i/p$ for some integer $i=1, 2, \cdots p-1$. Thus such an $a$ will be counted  $p-1$ times in the sum (\ref{thmmain}), thus contribute by $(p-1)\mu(a)$ to the sum. The total contribution from $(\frac{x}{n},  \frac{x}{n-1/p})$ will therefore be  precisely

\begin{equation}
(p-1)M_p^x(\frac{x}{n}, \quad  \frac{x}{n-1/p})
\end{equation}

\bigskip
Similarly,
\begin{eqnarray*}
% \nonumber % Remove numbering (before each equation)
  a\in (\frac{x}{n-1/p},  \quad \frac{x}{n-2/p})  \quad &\Rightarrow& \quad n-\frac{2}{p} <\frac{x}{a}< n-\frac{1}{p}\\\\
 &\Rightarrow& \quad  \frac{p-2}{p} < R(0, \frac{x}{a})< \frac{p-1}{p}
\end{eqnarray*}

Thus  $R(0, \frac{x}{a})> R(0, \frac{kN_p}{a})$ for $p-2$ different values of $k$. Therefore the contribution from all integers in $ (\frac{x}{n-1/p},  \frac{x}{n-2/p})$ to the sum above is

\begin{equation}
(p-2)M_p^x(\frac{x}{n-1/p}, \quad  \frac{x}{n-2/p})
\end{equation}

\bigskip

Continuing this way, for a general $i<p-1$, we have
\begin{eqnarray*}
% \nonumber % Remove numbering (before each equation)
  a\in (\frac{x}{n-(i-1)/p},  \quad \frac{x}{n-i/p})  \quad &\Rightarrow& \quad n-\frac{i}{p} <\frac{x}{a}< n-\frac{i-1}{p}\\\\
 &\Rightarrow& \quad  \frac{p-i}{p} < R(0, \frac{x}{b})< \frac{p-i+1}{p} \\
\end{eqnarray*}

Thus  $R(0, \frac{x}{b})> R(0, \frac{kN_p}{a})$ for $p-i$ values of $k$. Therefore, the contribution from $ (\frac{x}{n-k/p},  \frac{x}{n-(k+1)/p})$ to the sum above is

\begin{equation}
(p-i)M_p^x(\frac{x}{n-(1-1)/p}, \quad  \frac{x}{n-i/p})
\end{equation}
\bigskip

Finally, for $n\neq 1$,
\begin{eqnarray*}
% \nonumber % Remove numbering (before each equation)
  a\in (\frac{x}{n-(p-1)/p},  \quad \frac{x}{n-1})  \quad &\Rightarrow& \quad n-1 <\frac{x}{a}< n-\frac{p-1}{p}\\\\
 &\Rightarrow& \quad  0< R(0, \frac{x}{a})< \frac{1}{p} \\
\end{eqnarray*}

Thus $ R(0, \frac{x}{b})< R(0, \frac{kN_p}{a})$ for every value of $k=1, 2..$.  Therefore the contribution from the intervals  $(\frac{x}{n-(p-1)/p},  \quad \frac{x}{n-1})$ will be zero.

Also,
\begin{eqnarray*}
% \nonumber % Remove numbering (before each equation)
  a\in (px,  \quad \infty)  \quad &\Rightarrow& \quad 0<\frac{x}{a}<\frac{1}{p} \\
 &\Rightarrow &  \quad  0< R(0, \frac{x}{a})< \frac{1}{p}\label{px} \\
\end{eqnarray*}

Thus the contribution from $(px,  \quad \infty)$ is also zero. This is especially important as it eliminates products greater than $px$ from consideration which is the overwhelming majority. \\

Adding up the contributions from all intervals in (\ref{interval}), which contain all square free integers in the summation (\ref{thmmain}),  we arrive at

\begin{equation}
\sum_{n=1}^m\sum_{i=1}^{p-1} (p-i)M_p^x(\frac{x}{n-(i-1)/p},  \quad \frac{x}{n-i/p})
\end{equation}

which is equal to

\[\sum_{n=1}^{m} \sum_{i=1}^{p-1} M_{p}^x(\frac{x}{n},  \quad \frac{x}{n-i/p})\]

since  \[\sum_{i=1}^{p-1} (p-i)M_p^x(\frac{x}{n-(i-1)/p},  \quad \frac{x}{n-i/p}) =\sum_{i=1}^{p-1} M_{p}^x(\frac{x}{n},  \quad \frac{x}{n-i/p})\]

\bigskip
Note that $a\in (\frac{x}{n},  \quad \frac{x}{n-i/p})$ implies $a< px$, which implies $a/p<x \Rightarrow m(a)<x$, thus, we can replace $M_p^x$ with $M_p$.

Also since \[M_p(x, y)=-M'_p(\frac{x}{p}, \frac{y}{p})\] we have

\begin{eqnarray*}M_p(\frac{x}{n},  \quad \frac{x}{n-i/p})&=&-M'_p(\frac{1}{p}\frac{x}{n},  \quad \frac{1}{p}
\frac{x}{n-i/p})\\&=&-M'_p(\frac{x}{np},  \quad
\frac{x}{np-i})
\end{eqnarray*}

Therefore,
\begin{eqnarray}\sum_{k=1}^{p-1}[|S\left(kN_p-x,\quad  kN_p\right)|-|S\left(0,\quad  x\right)|]
&=&-\sum_{n=1}^{m} \sum_{i=1}^{p-1} M^x_{p}(\frac{x}{n},  \quad \frac{x}{n-i/p})\\
&=&-\sum_{n=1}^{m} \sum_{i=1}^{p-1} M_{p}(\frac{x}{n},  \quad \frac{x}{n-i/p})\\
&=&-\sum_{n=1}^{m} \sum_{i=1}^{p-1} M'_{p}(\frac{x}{np},  \quad \frac{x}{np-i})\\
\end{eqnarray}

Since
\[|S\left(kN_p-x,\quad  kN_p\right)|-|S_x(0, \quad x)|=[\log_px]+1\] by (\ref{corollary1}), for every $k$, the theorem follows.

\end{proof}

\end{document}